\documentclass[11pt]{article}
\usepackage[utf8]{inputenc}
\usepackage[hidelinks]{hyperref}

\usepackage[a4paper, total={6in, 9in}]{geometry}
\usepackage{graphicx}
\usepackage{booktabs} 
\usepackage{array} 
\usepackage{paralist} 
\usepackage{amsmath}
\usepackage{amssymb}
\usepackage{amsthm}
\usepackage{xcolor}
\usepackage{xurl}
\usepackage{mathtools}
\usepackage{subfigure}
\usepackage{float}

\usepackage{multicol}

\newtheorem{theorem}{Theorem}[section]
\newtheorem*{acknowledgement*}{Acknowledgement}

\newtheorem{lemma}[theorem]{Lemma}

\newtheorem{proposition}[theorem]{Proposition}
\newtheorem{remark}[theorem]{Remark}

\def\R{\mathbb{R}}

\newcommand{\ep}{\epsilon}
\def\P{{\mathbb P}}

\DeclareMathAlphabet{\mathmybb}{U}{bbold}{m}{n}

\DeclareMathOperator{\E}{\mathbb E}

\title{The relative efficiency of sequential tests}
\author{Henri Doerks\footnote{E-mail: \url{henri.doerks@math.uu.se}}\\Department of Mathematics, Uppsala University\\ \\
Erik Ekström\footnote{E-mail: \url{ekstrom@math.uu.se}}\\Department of Mathematics, Uppsala University\\
\\Yuqiong Wang\footnote{E-mail: \url{yuqw@umich.edu}}\\Department of Mathematics, University of Michigan\\ \\
}

\date{\today}

\begin{document}
\maketitle

\begin{abstract}
While many statistical procedures rely on a fixed sample size, sequential methods allow 
a decision-maker to adapt the sample size to achieve a given precision.
In this way, sequential tests reduce the average number of observations required to achieve a given power of the test -- but by how much? To address this question, we focus on the scenario of testing the unknown drift of a Brownian motion, comparing the Wald sequential probability ratio test with tests that use a pre-determined fixed sample size. We provide precise bounds on the average reduction in sample size needed to achieve a desired precision. Specifically, we demonstrate that for symmetric error bounds, the sequential test reduces the average sample size by at least 36\% and by at most 75\%. Moreover, the reduction in sample size increases monotonically with the power of the test, meaning that the relative advantage of using a sequential test over a fixed sample size test grows as higher power is required. 
We also 
study the relative efficiency in the case with asymmetric error bounds, and we provide a lower bound in terms of the symmetric case.
\end{abstract}

\section{Introduction}

In many statistical applications, the conventional approach involves collecting a fixed, pre-determined number of observations before performing a hypothesis test. 
Sequential analysis offers a compelling alternative, allowing for decisions to be made dynamically as data is gathered. Potentially, this can reduce the number of observations needed to achieve a specified level of power, as the test may terminate early once sufficient evidence has been accumulated.
In view of this, a central question arises:
how much does the average sample size decrease when switching from a fixed-sample design to a sequential testing procedure? 

In this article, we investigate the classical hypothesis testing problem for a Brownian motion with an unknown drift that can take two possible values.  
Within this setting, we compare the Wald sequential probability ratio test (SPRT) to a traditional test based on a pre-determined fixed sample size. In particular, we aim to quantify the {\bf relative efficiency} of SPRT, measured as
the quotient of the expected sample size in sequential testing with the sample size in the
corresponding fixed-sample procedure.
In many discrete time statistical tests on i.i.d. samples, the log-likelihood is a random walk with unknown drift, and under mild moment
assumptions, their rescaled paths can be seen as a Brownian motion with unknown drift. Our study can thus be
read as a limiting reference point for such tests.

The SPRT was originally proposed by Wald in the 1940's (see \cite{W}), and its relative efficiency 
was soon studied by several authors, cf. \cite{A}, \cite{Be}, \cite{B} and \cite{EG}. In particular, 
it is well-known (see \cite{A}, \cite{EG}, \cite[Chapter 4.2]{S}) that the relative efficiency approaches $\frac{1}{4}$ (which corresponds to a 75\% reduction in the average sample size) as the power of the test approaches 1. 
Moreover, it has been noted (cf. \cite{A} and \cite{EG}), using numerical calculations, 
that the convergence to $\frac{1}{4}$ is rather slow, and that typical average sample size reductions range between 50\% and 60\%.

In the current article, we complement and reinforce the results of \cite{A}, \cite{Be}, \cite{B} and \cite{EG} using a theoretical approach. 
Our main result (Theorem~\ref{main}) shows that in the case with symmetric error bounds for the two hypotheses, the possible range of the relative efficiency of SPRT is between 
$\frac{1}{4}$ and $\frac{2}{\pi}\approx 0.64$. In this way, SPRT reduces the average sample size by no less than 36\%, and by 
at most 75\%.
Moreover, the relative efficiency of the SPRT decreases {\bf monotonically} with the required power of the test, which corresponds to a monotone increase in the average reduction of the sample size. 
In other words, higher precision levels lead to larger (relative) time savings when using a sequential test instead of a test with a fixed sample size. Additionally, we confirm the slow convergence as the power increases to 1 by providing the exact convergence rate, which is of logarithmic type.

For {\bf asymmetrically} specified error bounds, the lower bound of $\frac{1}{4}$ for the relative efficiency is still valid (cf. Theorem~\ref{mainasym}). 
Interestingly, however, there is no non-trivial upper bound for the relative efficiency, which can be arbitrarily close to 1; furthermore, this happens when the asymmetric error bounds are very small.

The remainder of the paper is organized as follows. In Section 2, we outline the framework of tests of an unknown drift of a Brownian motion, 
and present both the optimal test with a fixed sample size and the SPRT. In Section~\ref{sec3} we state and prove our main result, which shows that in our specific setting
the reduction of the sample size is increasing in the power, and with a range between 36\% and 75\%.
Finally, in Section~\ref{sec4} we study the asymmetric situation, allowing for different probabilities for the two error types.

\section{Testing an unknown drift of a Brownian motion}
\label{sec2}

We adopt a classical set-up of hypothesis testing of an unknown drift of a Brownian motion as follows. 
Let $\theta$ be an unknown parameter, and let an observation process $X$ be given by
\[X_t=\theta  t+ W_t,\]
where $W$ is a standard Brownian motion with $W_0=0$.
In the hypothesis testing problem, the two hypotheses $H_1$ and $H_{-1}$ are tested against each other, where
\[H_1:\theta=1\]
and 
\[H_{-1}:\theta=-1.\]
Here, in a test with a fixed sample size, one observes the process $X$ over a time-interval $[0,T]$, where $T$ is a constant chosen before the sample is collected. 
On the other hand, in a sequential test the observer may choose a stopping time $\tau$ 
to end the test once sufficient evidence for one of the hypotheses has been obtained.
Also, a decision rule is an $\mathcal F^X_{\tau}$-measurable (or $\mathcal F^X_{T}$-measurable in the case of fixed sample size)
random variable $d$ taking values in $\{-1,1\}$. 
The terminal decision $d$ indicates which hypothesis to accept as follows: if $d=1$ then $H_1$ is accepted,
and if $d=-1$ then $H_{-1}$ is accepted. 

Naturally, the higher precision one wants, the larger sample size should be expected.
Denote $1-\alpha\in(\frac{1}{2},1)$ the power of a test, so that $\alpha\in(0,\frac{1}{2})$ is the maximal probability with which the wrong hypothesis is accepted. For a decision pair $(\tau,d)$ to have power $1-\alpha$, we thus require that 
\[\P_{1}(d=-1)\leq \alpha \quad\quad\& \quad\quad \P_{-1}(d=1)\leq \alpha,\]
where $\mathbb P_a$, $a\in\{-1,1\}$, denotes a probability measure under which $\theta=a$.
Since every deterministic time $T$ is a stopping time, the average sample size $\E_{a}[\tau]$ in an optimal sequential test is necessarily smaller than the fixed-size sample $T$ needed to obtain the same precision due to the additional flexibility. 
The aim of the present paper is to quantify this improvement by studying the relative efficiency (of the form $\frac{\E_{a}[\tau]}{T}$), or equivalently the average reduction (of the form $\frac{T-\E_{a}[\tau]}{T}$) as a function of the power of the test. \looseness=-1
\subsection{Tests with a fixed sample size}

It is clear by symmetry that an optimal decision at a deterministic time $T$ is given by
\[d=1_{\{X_T\geq 0\}}-1_{\{X_T< 0\}}.\]
Therefore, to bound the error probabilities by $\alpha$, one needs to use a deterministic time $T\in(0,\infty)$ such that $T\geq T_\alpha$, with $T_\alpha$ defined by
\[\mathbb P_{-1}(X_{T_\alpha}\geq 0)=\Phi\left(-\sqrt{T_\alpha}\right)=\alpha,\]
where
\[\Phi(x)=\int_{-\infty}^x\varphi(y)\,dy\qquad\&\qquad\varphi(y)=\frac{1}{\sqrt{2\pi}}e^{-y^2/2}\]
are the distribution and density function of the standard normal distribution.
Consequently, 
\[T_\alpha=\left(\Phi^{-1}(\alpha)\right)^2.\]

\subsection{Sequential tests}

From classical results by Wald (for a thorough account of SPRT in the setting of a Brownian motion with unknown drift, see \cite[Chapter 4]{S}), there exists one test that minimizes simultaneously $\E_{-1}[\tau]$ and $\E_1[\tau]$ over all sequential tests with power $1-\alpha$. The stopping time in that test is given by
\[\tau_{\alpha}:=\inf\left\{t\geq 0:X_t\notin\left(\frac{1}{2}\ln\frac{\alpha}{1-\alpha}, \frac{1}{2}\ln\frac{1-\alpha}{\alpha}\right)\right\},\]
with a decision variable
\[d=1_{\{X_{\tau_\alpha} = \frac{1}{2}\ln\frac{1-\alpha}{\alpha}\}}- 
1_{\{X_{\tau_\alpha} = \frac{1}{2}\ln\frac{\alpha}{1-\alpha}\}}.\]
Moreover, the average sample size satisfies
\[\E_{-1}\left[\tau_{\alpha}\right] = \E_1\left[\tau_{\alpha}\right] 
= \eta(\alpha),\]
where
\[\eta(\alpha):=
\frac{\left(1-2\alpha\right)}{2}
\ln\frac{1-\alpha}{\alpha}.\]

\subsection{Dependence on the signal-to-noise ratio}

Above, we are equipped with an observation process 
$X_t=\theta t + W_t$ and we test the hypotheses $H_1: \theta=1$ and $H_{-1}:\theta=-1$ against each other, which corresponds to a constant signal-to-noise ratio $2$. If the observation process instead is given by 
\begin{equation}
    \label{Xmu}
    X_t=\theta \mu t +\sigma W_t
    \end{equation}
for positive constants $\mu$ and $\sigma$, then the signal-to-noise ratio is $\rho:=\frac{2\mu}{\sigma}$. By Brownian scaling, the process
\[\tilde X_t:=\frac{\mu}{\sigma^2} X_{\sigma^2t/\mu^2}\]
then satisfies 
\[\tilde X_t=\theta t + \tilde W_t,\]
where $\tilde W_t:=\frac{\mu}{\sigma} W_{\sigma^2t/\mu^2}$ is a standard Brownian motion. Consequently, the fixed sample size $T^\rho_{\alpha}$ needed to achieve 
a certain precision $1-\alpha$ with signal-to-noise ratio  $\rho$ satisfies
\[T^{\rho}_{\alpha}=\frac{4}{\rho^2}T_\alpha.\]
Similarly, the optimal stopping time $\tau^\rho_\alpha$ needed for precision $1-\alpha$ in the setting of \eqref{Xmu}  
satisfies in expectation
\[\E_a[\tau^\rho_{\alpha}]=\frac{4}{\rho^2}
E_a[\tau_\alpha].\]
In conclusion, both the fixed sample size and the average sequential sample size scale inversely with the square of the signal-to-noise ratio. Therefore, the relative sample size reduction
(defined in \eqref{reduction} below) is independent of the signal-to-noise ratio, and we thus use $\mu=\sigma=1$ throughout the article.

\section{Relative efficiency}\label{sec3}

In this section we study the 
relative efficiency
\[
    \frac{\E_{1}[\tau_\alpha]}{T_\alpha}
\]
of the sequential probability ratio test compared with tests with fixed sample size, where $\alpha\in(0,\frac{1}{2})$ is a given acceptable error probability.
To do this, we introduce the
function $f: (0,\frac{1}{2})\to [0,1]$ given by 
\begin{equation} 
\label{ffunction}
f(\alpha):=\frac{\E_1[\tau_\alpha]}{T_\alpha}=
\frac{\eta(\alpha)}{\left(\Phi^{-1}(\alpha)\right)^2},
\end{equation}
i.e. the quotient of the average sample size using sequential methods and the corresponding fixed sample size with the same power $1-\alpha$. 
We also note that 
\begin{equation}
    \label{reduction}
    1-f(\alpha)=\frac{T_\alpha-\E_{1}[\tau_\alpha]}{T_\alpha}
\end{equation}
is the average sample size reduction.

The following is our main result.
 
\begin{theorem}\label{main}
The function $f$ is increasing on $(0,\frac{1}{2})$, with $f(0+)=\frac{1}{4}$ and $f(\frac{1}{2}-)=\frac{2}{\pi}$.
\end{theorem}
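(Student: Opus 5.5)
Reparametrize by $x:=-\Phi^{-1}(\alpha)\in(0,\infty)$, so that $\alpha=\Phi(-x)=1-\Phi(x)$. Since $x$ is decreasing in $\alpha$, the claim becomes: the function
\[
g(x):=\frac{(2\Phi(x)-1)\,\ln\frac{\Phi(x)}{1-\Phi(x)}}{2x^2}
\]
is decreasing on $(0,\infty)$, with $g(0+)=\frac{2}{\pi}$ and $g(\infty)=\frac14$. I will write $p=\Phi(x)$ and $q=1-\Phi(x)$ for brevity.

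\textbf{Limits.} As $x\to0$, Taylor expansion gives $2p-1\approx 2\varphi(0)x$ and $\ln(p/q)\approx 4\varphi(0)x$. The numerator is therefore about $8\varphi(0)^2x^2=\frac{4}{\pi}x^2$, and $g(0+)=\frac{2}{\pi}$. As $x\to\infty$, the Mills ratio asymptotics give $-\ln q=\frac{x^2}{2}+\ln x+\frac12\ln(2\pi)+o(1)$, so $g(x)=\frac14+\frac{\ln x}{2x^2}+O(x^{-2})\to\frac14$. This also yields the logarithmic rate of convergence mentioned in the introduction.

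\textbf{Monotonicity.} Write $g=\frac{A(x)B(x)}{2x^2}$ with $A=2\Phi-1$ and $B=\ln(p/q)$. I would try to factor this as a product of two positive decreasing functions, for instance
\[
g(x)=\frac{A(x)}{x}\cdot\frac{B(x)}{2x}.
\]
The first factor $A(x)/x=\frac{1}{x}\int_0^x 2\varphi$ is the average of the decreasing function $2\varphi$ over $[0,x]$, so it is decreasing. The second factor $\frac{B(x)}{x}$ is decreasing if $B$ is concave on $(0,\infty)$ with $B(0)=0$, because then $B(x)/x$ is a slope of chords from the origin. Now
\[
B'=\frac{\varphi}{pq}, \qquad B''=\frac{\varphi}{pq}\Big(-x-\frac{\varphi(p-q)}{pq}\Big)\le 0,
\]
since for $x\ge0$ we have $p\ge q$. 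Hence $B$ is concave and $B(x)/x$ is decreasing. Both factors are positive, so $g$ is decreasing, strictly because the first factor is strictly decreasing. Therefore $f$ is increasing in $\alpha$.

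I expected the main obstacle to be the monotonicity step, but the factorization makes it elementary. The only care needed is the careful check of the sign of $B''$ and the limit computations, the latter requiring the Mills ratio expansion $q\sim\varphi(x)/x$.
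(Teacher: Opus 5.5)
Your limit computations are correct, but the monotonicity step fails: the second factor $B(x)/(2x)$ is not decreasing. Your own asymptotics already show this. Since $B(x)=\ln(p/q)=-\ln q+o(1)=\frac{x^2}{2}+\ln x+O(1)$, you get $B(x)/(2x)\sim x/4\to\infty$. The cause is a sign slip in $B''$. You have $(pq)'=\varphi q-p\varphi=-\varphi(p-q)$, hence
\[
B''=\frac{\varphi}{pq}\left(\frac{\varphi\,(p-q)}{pq}-x\right).
\]
The bracket is in fact positive: it is about $(\frac{4}{\pi}-1)x$ for small $x$ and about $1/x$ for large $x$. So $B$ is convex and $B(x)/x$ is increasing. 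Your factorization therefore writes $g$ as a decreasing factor ($\sim 1/x$) times an increasing factor ($\sim x/4$), and nothing about the product follows.

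What is actually needed is a quantitative comparison, for example the log-derivative inequality $\frac{2\varphi}{p-q}+\frac{\varphi}{pq\,B}\le\frac{2}{x}$. This is where the real content of the theorem lies. The paper reduces $f'\ge 0$ through two further differentiations, using that each auxiliary function vanishes at $\alpha=\frac12$. It ends at the new Mills-ratio-type inequality $z(2+z^2)\Phi(z)(1-\Phi(z))\ge(1+z^2)(2\Phi(z)-1)\varphi(z)$ for $z\ge 0$, which it proves by a differential-inequality comparison argument. In your notation this lemma says exactly $\frac{\varphi(p-q)}{pq}-x\le\frac{x}{1+x^2}$. It is an upper bound on precisely the bracket whose sign you got wrong. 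So the proof has to control how convex $B$ is, not just the sign of $B''$, and your proposal contains no ingredient of this kind.
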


\begin{remark}
In Figure~\ref{fig1}, the function $f$ is plotted.
Note that it follows from Theorem~\ref{main} that the average sample size reduction $1-f(\alpha)$ (compare \eqref{reduction}) is increasing in the power $1-\alpha$ of the test. Moreover, 
     $1-f(0+)=\frac{3}{4}$, so the maximal reduction in average sample size is 75\%. Similarly, $1-f(\frac{1}{2})=1-\frac{2}{\pi}\approx 0.36$, so the minimal reduction is 36\%. The reduction for a few common values of $\alpha$ is listed in Table~\ref{tab}. 
\end{remark}

\begin{figure}[!htbp]
\centering
\begin{minipage}{0.5\textwidth}
    \centering
    \includegraphics[width=\textwidth]{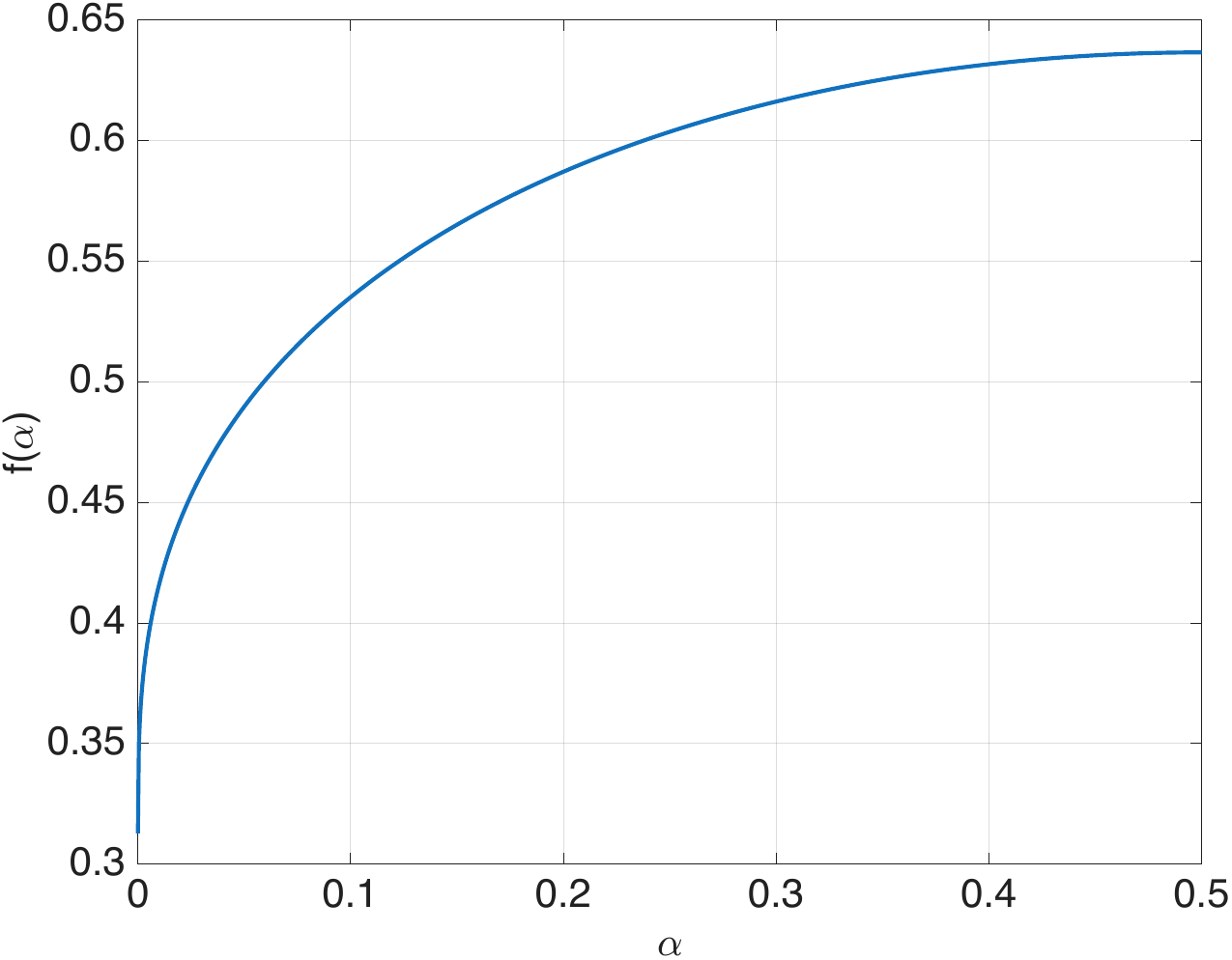}
\end{minipage}\hfill
\begin{minipage}{0.5\textwidth}
    \centering
    \includegraphics[width=\textwidth]{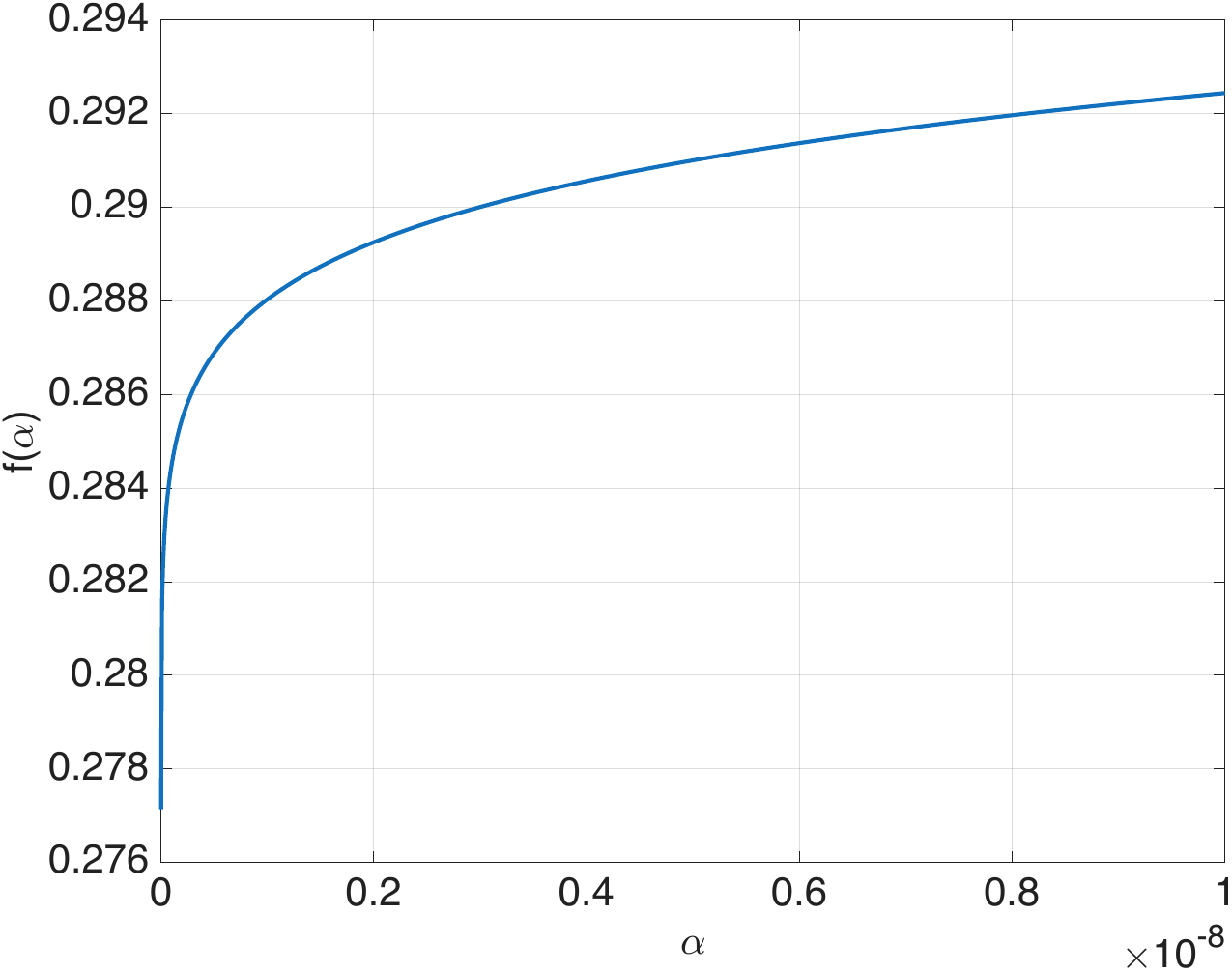}
\end{minipage}
\caption{The function $f(\alpha)$ for $\alpha\in(0,\frac{1}{2})$ (left), and $f(\alpha)$ for $\alpha\in(0,10^{-8})$ (right).}
\label{fig1}
\end{figure}

\begin{table}[!htbp]
\centering
\caption{Relative efficiency of SPRT.}
\begin{tabular}{c|cc}
\hline\hline
Power $1-\alpha$ & $f(\alpha)$ & Average reduction \\
\hline
0.80 & 0.5871 & 41.29\% \\
0.90 & 0.5351 & 46.49\% \\
0.95 & 0.4897 & 51.03\% \\
0.99 & 0.4160 & 58.40\% \\
0.999&0.3609&63.91\%\\
\hline
\end{tabular}\label{tab}
\end{table}

\begin{remark}
The limit $f(0+)=\frac{1}{4}$ was derived in \cite{A}, see also \cite[Section 6]{EG} and \cite[p.193]{S}. 
\end{remark}

For the proof of Theorem~\ref{main}, we will need the following lemma.

\begin{lemma}\label{lem}
The inequality 
\begin{equation}\label{ineqMills}
z(2+z^2)\Phi(z)(1-\Phi(z))\geq (1+z^2)(2\Phi(z)-1)\varphi(z)
\end{equation}
holds for all $z\geq 0$.
\end{lemma}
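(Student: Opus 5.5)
Define the difference
\[
g(z):=z(2+z^2)\Phi(z)\bigl(1-\Phi(z)\bigr)-(1+z^2)\bigl(2\Phi(z)-1\bigr)\varphi(z), \qquad z\ge 0 .
\]
The plan is to show $g\ge 0$ on $[0,\infty)$ by an ODE-type argument: check the value at $z=0$, then control $g'$ (or a suitably normalised version of $g$).

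\textbf{Behaviour near $0$.} At $z=0$ both sides of \eqref{ineqMills} vanish, since $2\Phi(0)-1=0$. So $g(0)=0$, and it suffices to show that $g$ is nondecreasing, or at least that $g$ cannot cross zero.

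A Taylor expansion is a useful sanity check. Write $\Phi(z)=\tfrac12+\varphi(0)(z-z^3/6+\dots)$, so that $\Phi(1-\Phi)=\tfrac14-\varphi(0)^2z^2+\dots$. The left side is $\tfrac{z}{2}+O(z^3)$. The right side is $(1+z^2)\cdot 2\varphi(0)z\cdot\varphi(0)+\dots=\tfrac{z}{\pi}+O(z^3)$. Since $\tfrac12>\tfrac1\pi$, the leading term of $g$ is positive and the inequality holds for small $z$.

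\textbf{Normalisation.} It is cleaner to divide by a positive quantity so that the derivative simplifies. Set $u=2\Phi(z)-1\in[0,1)$, so $\Phi(1-\Phi)=(1-u^2)/4$. The inequality becomes
\[
h(z):=\frac{z(2+z^2)}{4(1+z^2)}\,(1-u^2)-u\varphi\ \ge 0 .
\]
Differentiate using $u'=2\varphi$ and $\varphi'=-z\varphi$. This gives
\[
h'(z)=\frac{d}{dz}\!\left[\frac{z(2+z^2)}{4(1+z^2)}\right](1-u^2)-\frac{z(2+z^2)}{1+z^2}\,u\varphi-2\varphi^2+zu\varphi .
\]
Now use the identity $u\varphi = \frac{z(2+z^2)}{4(1+z^2)}(1-u^2)-h$. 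Every $u\varphi$ term can then be replaced by $h$ plus an expression in $1-u^2$ and $\varphi^2$. This yields a linear differential inequality of the form
\[
h'+a(z)h=b(z)(1-u^2)-2\varphi^2 .
\]
Showing the right-hand side is nonnegative reduces to an inequality of the form $1-u^2\ge c(z)\varphi^2$. By the identity $1-u^2=4\Phi(z)\Phi(-z)$, this is a Mills-ratio type bound
\[
\Phi(-z)\ge \kappa(z)\,\varphi(z),
\]
where $\kappa(z)$ is a rational function.

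\textbf{Closing with classical Mills-ratio bounds.} I would settle that final inequality using the classical bounds on $R(z)=\Phi(-z)/\varphi(z)$, chosen according to which side is needed:
\[
\frac{z}{1+z^2}\le R(z)\le \frac1z, \qquad R(z)\ge \frac{2}{z+\sqrt{z^2+4}},
\]
the second being Birnbaum's sharper lower bound. These must be combined with the crude bounds $\tfrac12\le\Phi(z)\le 1$. An integrating factor $e^{\int a}$ together with $h(0)=0$ then gives $h\ge0$.

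\textbf{Main obstacle.} The difficult step is exactly this final verification. The two sides agree to leading order as $z\to\infty$: the left side is $\approx z^3\varphi/z=z^2\varphi$ and the right side is $\approx z^2\varphi$. So the crude Mills bounds may be too weak at large $z$, and the next-order asymptotics must be matched. To handle this I would split $[0,\infty)$ into a compact range and a tail. On the compact range, use the Taylor argument or rigorous numerical bounds. For large $z$, use the asymptotic series $R(z)=1/z-1/z^3+3/z^5-\dots$, which has alternating-sign error bounds, to check the inequality with explicit remainders.
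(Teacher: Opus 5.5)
Your differential identity is correct, but the step meant to close the argument is false. Carrying out exactly the substitution you describe gives $a(z)=-\frac{z}{1+z^2}$ and $b(z)=\frac{2-z^2}{4(1+z^2)^2}$:
\[
h'-\frac{z}{1+z^2}\,h=\frac{2-z^2}{4(1+z^2)^2}\bigl(1-u^2\bigr)-2\varphi^2 .
\]
For $z\ge\sqrt2$ the right-hand side is negative. Numerically it is already negative from roughly $z\approx 0.48$ on; at $z=1$ it is about $0.033-0.117$. So the nonnegativity your plan reduces to is false on most of $(0,\infty)$, and no choice of Mills-ratio bounds can repair it.

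The failure is structural, not a matter of sharpness. We have $g(z)\to0$ as $z\to\infty$, while $g>0$ near the origin (your own Taylor check). Hence neither $g$, nor $h$, nor $H:=h\,(1+z^2)^{-1/2}=g\,(1+z^2)^{-3/2}$ (what your integrating factor produces) can be nondecreasing from its zero at $z=0$. The missing ingredient is the boundary value $g(\infty)=0$, which your proposal never uses.

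The paper uses exactly that, and it eliminates the other term, writing
\[
g'=\frac{2+3z^2}{z(2+z^2)}\,g+\frac{2-z^2}{z(2+z^2)}\bigl(2\Phi-1\bigr)\varphi-2(1+z^2)\varphi^2 .
\]
Here the remainder is nonpositive for every $z>0$. This is trivial for $z\ge\sqrt2$. For $z<\sqrt2$ it follows from $2\Phi(z)-1\le z(1+z^2)(2+z^2)\varphi(z)$, since the difference vanishes at $0$ and has derivative $z^2(z^4-2z^2-7)\varphi(z)<0$ there. Thus $g'\le a g$ with $a>0$. If $g(z_0)<0$, then $g$ keeps decreasing for $z>z_0$, contradicting $g(\infty)=0$.

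Your own identity can also be rescued with the same ingredient. One has $H'=(1+z^2)^{-1/2}\varphi^2(Q-2)$ with $Q=\frac{(2-z^2)\Phi(z)\Phi(-z)}{(1+z^2)^2\varphi^2(z)}$. Moreover $(\log Q)'\le-\frac{2z}{2-z^2}+\frac{2z(z^2-1)}{1+z^2}<0$ on $(0,\sqrt2)$, because $(z^2-1)(2-z^2)<1+z^2$. Also $Q(0)=\pi>2$ and $Q\le 0$ for $z\ge\sqrt2$. So $H$ first increases and then decreases, and $H(0)=0=H(\infty)$ forces $H\ge0$.

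As for your fallback of checking the original inequality directly: each side is $\sim z^2\varphi(z)$ while $g\sim\varphi(z)/z^2$, so they differ only at relative order $z^{-4}$. Consequently neither $z/(1+z^2)$ nor Birnbaum's bound suffices at large $z$; both leave a negative leading term. You would need the lower bound $1/z-1/z^3+3/z^5-15/z^7$ on a tail such as $z\ge 3.5$, plus a rigorous computer-assisted check on the compact part.
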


\begin{remark}
Note that Lemma~\ref{lem} provides a lower bound for the Mills ratio \mbox{$M(z)=\frac{1-\Phi(z)}{\varphi(z)}$}. While bounds on the Mills ratio have been studied by many authors (see, e.g., \cite{Mills_ratio_ineq} and \cite{GU}),
we have not been able to find \eqref{ineqMills} in the literature.
\end{remark}

\begin{proof}
Define $g:[0,\infty)\to\R$ by 
\[g(z)=z(2+z^2)\Phi(z)(1-\Phi(z))-(1+z^2)(2\Phi(z)-1)\varphi(z),\]
and note that $g(0)=g(\infty)=0$. Moreover, straightforward differentiation gives
\begin{eqnarray}\label{gprime}
     g'(z) &=& (2+3z^2)\Phi(z)(1-\Phi(z))- 
    3z(2\Phi(z)-1)\varphi(z)-2(1+z^2)\varphi^2(z)\\
\notag     &=& \frac{2+3z^2}{z(2+z^2)}g(z)+\frac{2-z^2}{z(2+z^2)}(2\Phi(z)-1)\varphi(z) -2(1+z^2)\varphi^2(z).
 \end{eqnarray}

We now claim that 
\begin{equation}
    \label{ineq}
    \frac{2-z^2}{z(2+z^2)}(2\Phi(z)-1)-2(1+z^2)\varphi(z)\leq 0
\end{equation}
for $z>0$.
To see that \eqref{ineq} holds, first note that it clearly holds for $z\geq \sqrt 2$ since both terms then are negative. Next, let 
\[h(z):=2\Phi(z)-1-z(1+z^2)(2+z^2)\varphi(z),\]
and note that it suffices to check that $h(z)\leq 0$ for $0\leq z\leq \sqrt 2$.
Clearly, $h(0)=0$ and 
\[h'(z)=z^2(z^4-2z^2-7)\varphi(z),
\]
which is negative on $(0,\sqrt{1+\sqrt 8})\supseteq(0,\sqrt 2)$.
Therefore, $h(z)\leq 0$ for $z\in (0,\sqrt 2)$, so
\eqref{ineq} holds.

Combining \eqref{gprime} and \eqref{ineq}, we find that
\begin{equation}\label{gdiff}
g'(z)\leq \frac{2+3z^2}{z(2+z^2)}g(z)
\end{equation}
for $z>0$.
Together with $g(0)=g(\infty)=0$, this implies that $g(z)\geq 0$ for all $z\geq 0$. Indeed, assuming the existence of an $z_0\in(0,\infty)$ s.t. $g(z_0)<0$, the inequality \eqref{gdiff} implies that $g(z)\leq g(z_0)<0$ for all $z\geq z_0$. This contradicts $g(\infty)=0$, finishing the proof.
\end{proof}

\noindent
{\em Proof of Theorem~\ref{main}.}
We first derive the limits of $f(\alpha)$ as $\alpha\downarrow 0$ and $\alpha\uparrow 1/2$. Using l'H\^opital's rule twice, we find that 
\begin{eqnarray*}
    f(0+) &=& \lim_{\alpha\to 0}\frac{\eta''(\alpha)}{2\left(\frac{1}{\varphi^2(\Phi^{-1}(\alpha))}+\frac{\left(\Phi^{-1}(\alpha)\right)^2}{\varphi^2(\Phi^{-1}(\alpha))}\right)}\\
    &=& \lim_{z\to\infty}\frac{\varphi^2(z)}{4 \Phi^2(z)(1-\Phi(z))^2(1+z^2)} =\frac{1}{4}
\end{eqnarray*}
since  $\lim_{z\to\infty}\frac{\varphi(z)}{z(1-\Phi(z))}=1$.
Similarly, 
\begin{eqnarray*}
    f\left(\frac{1}{2}-\right) &=& \lim_{\alpha\to \frac{1}{2}}\frac{\eta''(\alpha)}{2\left(\frac{1}{\varphi^2(\Phi^{-1}(\alpha))}+\frac{\left(\Phi^{-1}(\alpha)\right)^2}{\varphi^2(\Phi^{-1}(\alpha))}\right)}\\
    &=& \lim_{\alpha\to \frac{1}{2}}\frac{\varphi^2(\Phi^{-1}(\alpha))}{4\alpha^2(1-\alpha)^2\left(1+\left(\Phi^{-1}(\alpha)\right)^2\right)}\\
    &=& 4\varphi^2(0) =\frac{2}{\pi}.
    \end{eqnarray*}
    
We now investigate the monotonicity of $f$.
Straightforward differentiation shows that
\[    f'(\alpha)=\frac{\eta'(\alpha)}{(\Phi^{-1}(\alpha))^2}- \frac{2\eta(\alpha)}{(\Phi^{-1}(\alpha))^3\varphi(\Phi^{-1}(\alpha))}.\]
To prove that $f'(\alpha)\geq 0$ for $\alpha\in(0,1/2)$, define
\begin{eqnarray*}   
  g(\alpha)&:=&2f'(\alpha)(\Phi^{-1}(\alpha))^3\varphi(\Phi^{-1}(\alpha))\\
    &=& 2\eta'(\alpha)\Phi^{-1}(\alpha)\varphi\left(\Phi^{-1}(\alpha)\right)-4\eta(\alpha),
    \end{eqnarray*}
and note that it suffices to show that $g\leq 0$. Since
$g(1/2)=0$, for this it suffices to check that $g'(\alpha)\geq 0$.
Another differentiation yields
\begin{eqnarray*}
    g'(\alpha) &=& 2\eta''(\alpha)\Phi^{-1}(\alpha)\varphi(\Phi^{-1}(\alpha)) -2\left(1+(\Phi^{-1}(\alpha))^2\right) \eta'(\alpha)\\
    &=& \frac{\Phi^{-1}(\alpha)\varphi(\Phi^{-1}(\alpha))}{\alpha^2(1-\alpha)^2 }-2\left(1+(\Phi^{-1}(\alpha))^2\right) \eta'(\alpha),
\end{eqnarray*}
where the second equality follows from the fact that 
\[\alpha^2(1-\alpha)^2\eta''(\alpha)=\frac{1}{2}.\]

Defining
\[
    h(\alpha) := \frac{g'(\alpha)}{1+(\Phi^{-1}(\alpha))^2}\\
    = -2\eta'(\alpha)+\frac{\Phi^{-1}(\alpha)\varphi(\Phi^{-1}(\alpha))}{\alpha^2(1-\alpha)^2(1+(\Phi^{-1}(\alpha))^2)} 
\]
we clearly have $h(1/2)=0$. Consequently, it suffices to show that $h'(\alpha)\leq 0$. Differentiation gives
\[h'(\alpha)= \frac{-2\Phi^{-1}(\alpha)\left(
\Phi^{-1}(\alpha)\frac{2+(\Phi^{-1}(\alpha))^2}{1+(\Phi^{-1}(\alpha))^2}
+\frac{(1-2\alpha)\varphi(\Phi^{-1}(\alpha))}{\alpha(1-\alpha)}
\right)}{\alpha^2(1-\alpha)^2\left(1+(\Phi^{-1}(\alpha))^2\right)},\] 
so it suffices to check that 
\[\Phi^{-1}(\alpha)\frac{2+(\Phi^{-1}(\alpha))^2}{1+(\Phi^{-1}(\alpha))^2}
+\frac{(1-2\alpha)\varphi(\Phi^{-1}(\alpha))}{\alpha(1-\alpha)}\leq 0\]
for $\alpha\in(0,1/2)$. 
Writing $z=-\Phi^{-1}(\alpha)$, however, this follows immediately from Lemma~\ref{lem}.
\qed
\newline

Although the maximal expected reduction is up to $75\%$, the convergence to this limit is rather slow, as indicated by Figure~\ref{fig1} (and also noted in \cite{EG}). Below, we provide the exact convergence rate, which is of order $\frac{\ln(-\ln(\alpha))}{\ln(\alpha)} $. Thus, a reduction close to the theoretical limit only happens when one requires extremely high precision. For example, to reach a $70\%$, $72\%$ or $74\%$ sample size reduction, an error probability $\alpha$ of approximately $2\times 10^{-7}$, $2\times 10^{-12}$ and $2\times 10^{-40}$ is required, respectively.

\begin{proposition}
We have
\begin{equation}\label{asymp}
f(\alpha) = \frac{1}{4}- \frac{\ln(-\ln\alpha)}{8\ln\alpha}
        +o\left(\frac{\ln(-\ln \alpha)}{\ln\alpha}\right)
\end{equation}
as $\alpha\to 0$.
\end{proposition}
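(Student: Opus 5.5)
We work directly with the explicit formula $f(\alpha)=\eta(\alpha)/(\Phi^{-1}(\alpha))^2$. The idea is to expand the numerator and the denominator separately in the small parameter $L:=-\ln\alpha\to\infty$. For the numerator, $\eta(\alpha)=\frac{1-2\alpha}{2}\bigl(L+\ln(1-\alpha)\bigr)=\frac{L}{2}+O(\alpha L)$. Since $\alpha L=o(\ln L/L)$, only the leading term $\frac{L}{2}$ matters at the precision required.

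The main work is the denominator. Set $z=-\Phi^{-1}(\alpha)>0$, so that $\alpha=1-\Phi(z)$. By the classical Mills ratio asymptotics (already used in the proof of Theorem~\ref{main} in the form $\varphi(z)/(z(1-\Phi(z)))\to1$),
\[
\alpha=\frac{\varphi(z)}{z}\bigl(1+O(z^{-2})\bigr).
\]
Taking logarithms gives
\[
L=\frac{z^2}{2}+\ln z+\tfrac12\ln(2\pi)+O(z^{-2}),
\]
that is,
\[
z^2=2L-2\ln z-\ln(2\pi)+o(1).
\]
Iterating once gives $z^2=2L(1+o(1))$, hence $\ln z=\frac12\ln L+O(1)$. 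Substituting back yields
\[
z^2=2L-\ln L+O(1).
\]
This bootstrap is the step I expect to need the most care. The concern is that the error terms must remain $O(1)$, which is harmless relative to $\ln L$, and that nothing spoils the coefficient $-1$ in front of $\ln L$. It is routine given the two-sided Mills bounds $\frac{z}{1+z^2}\varphi(z)\le 1-\Phi(z)\le \frac{\varphi(z)}{z}$.

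Combining the two expansions,
\[
f(\alpha)=\frac{L/2+O(\alpha L)}{2L-\ln L+O(1)}=\frac14\cdot\frac{1+o(\ln L/L)}{1-\frac{\ln L}{2L}+O(1/L)}.
\]
Expanding $(1-x)^{-1}=1+x+O(x^2)$ with $x=\frac{\ln L}{2L}+O(1/L)$ gives
\[
f(\alpha)=\frac14+\frac{\ln L}{8L}+O(1/L)+O\Bigl(\frac{(\ln L)^2}{L^2}\Bigr)=\frac14+\frac{\ln L}{8L}+o\Bigl(\frac{\ln L}{L}\Bigr).
\]
Finally, since $L=-\ln\alpha$, we have $\frac{\ln L}{8L}=-\frac{\ln(-\ln\alpha)}{8\ln\alpha}$. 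This is exactly \eqref{asymp}, and the error term matches because $\frac{\ln L}{L}$ and $\frac{\ln(-\ln\alpha)}{\ln\alpha}$ agree up to sign.
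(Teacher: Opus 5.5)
Your proof is correct and follows essentially the same route as the paper: you expand the numerator as $2\eta(\alpha)=-\ln\alpha+o(1)$, obtain $(\Phi^{-1}(\alpha))^2$ from the two-sided Mills ratio bounds via a logarithmic bootstrap, and then divide. The only difference is that the paper pins the denominator down to $-2\ln\alpha-\ln(-4\pi\ln\alpha)+o(1)$, whereas you stop at $2L-\ln L+O(1)$, which is all the stated precision requires.
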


\begin{proof}
Using 
\[2\eta(\alpha)+\ln\alpha=2\alpha\ln\frac{\alpha}{1-\alpha}+\ln(1-\alpha),\]
one sees that 
\begin{equation}\label{O}
    2\eta(\alpha)=-\ln \alpha   +o(1)
    \end{equation}
as $\alpha\to 0$.
We also claim that 
\begin{equation}
    \label{claim}
    (\Phi^{-1}(\alpha))^2 =-2\ln \alpha-\ln(-4\pi\ln\alpha)+o(1).
\end{equation}
Combining \eqref{O} and \eqref{claim}, \eqref{asymp} follows. 

To prove \eqref{claim}, we first perform a change of variable by letting $z:= -\Phi^{-1}(\alpha)$, and note that $\Phi(-z)=\alpha$ and $z\to\infty$ as $\alpha\to 0$. 
By classical estimates of the Mills ratio, we have
\[ \frac{\varphi(z)z^2}{z(1+z^2)}\leq\alpha\leq \frac{\varphi(z)}{z}.\]
Taking logarithm on both sides, we find that
\[z^2 + \ln (2\pi z^2) \leq -2\ln \alpha \leq z^2 + \ln (2\pi z^2)+2 \ln(1+z^{-2}), \]
which yields 
\begin{equation}\label{ineqs}
-\frac{1}{z^2}\leq z^2+2\ln\alpha +\ln(2\pi z^2) \leq 0.\end{equation}
Using $z^2\leq -2\ln\alpha$, we find that 
\begin{equation}
    \label{ineq1}
    -\frac{1}{z^2}\leq z^2+2\ln\alpha +\ln(-4\pi \ln\alpha).
    \end{equation}
Also, by \eqref{ineqs}, for any $\ep>0$ we have $(1+\ep) z^2\geq -2\ln\alpha$ for $z$ large enough, so 
\begin{equation}\label{ineq2}
z^2+2\ln\alpha +\ln(-4\pi \ln\alpha)\leq \ln(1+\ep)\leq \ep.
\end{equation}
It follows from \eqref{ineq1} and \eqref{ineq2} that 
\[z^2+2\ln\alpha +\ln(-4\pi \ln\alpha)=o(1),\]
so \eqref{claim} holds. This finishes the proof.
\end{proof}

\begin{remark}
One can also check that $f$ approaches its limit at $\alpha = \frac{1}{2}$ quadratically, i.e. $f(\alpha) = \frac{2}{\pi}+\mathcal{O}(( \frac{1}{2}-\alpha)^2)$
    as $\alpha\to\frac{1}{2}$.
\end{remark}

\section{The asymmetric case}\label{sec4}

Above we have treated the symmetric case, where both error probabilities $\P_{-1}(d=1)$ and $\P_1(d=-1)$ are bounded by the same constant $\alpha$.
If, instead, two possibly different bounds $\alpha$ and $\beta$ are provided for the two types of error, then one requires that
\begin{equation}\label{asym}
\P_{-1}(d=1)\leq \alpha \quad\quad\& \quad\quad \P_{1}(d=-1)\leq \beta.
\end{equation}
In this section we extend our study to this case.

First, it is straightforward to check that the smallest possible deterministic time $T_{\alpha,\beta}$ for which a test satisfying \eqref{asym} exists is given by
\[T_{\alpha,\beta}=\frac{(\Phi^{-1}(\alpha)+ \Phi^{-1}(\beta))^2}{4},\]
with corresponding decision variable
\[d=
1_{\{X_{T_{\alpha,\beta}} \geq T_{\alpha,\beta} + \sqrt{T_{\alpha,\beta}}\Phi^{-1}(\beta)\}}-
1_{\{X_{T_{\alpha,\beta}} < T_{\alpha,\beta}+ \sqrt{T_{\alpha,\beta}}\Phi^{-1}(\beta)\}}.\]
On the other hand, the stopping time in the SPRT is given by
\[\tau_{\alpha,\beta}: = \inf\left\{t\geq 0: X_t\notin \left(\frac{1}{2}\ln\frac{1-\beta}{\alpha},\frac{1}{2}\ln\frac{\beta}{1-\alpha}\right)\right\},\]
with decision variable 
\[d=
1_{\{X_{\tau_{\alpha,\beta}}=\frac{1}{2}\ln\frac{\beta}{1-\alpha}\}}
-1_{\{X_{\tau_{\alpha,\beta}}=\frac{1}{2}\ln\frac{1-\beta}{\alpha}\}}. \]
In addition, $\E_{-1}\left[\tau_{\alpha,\beta}\right]=\omega(\alpha, \beta)$ and $\E_{1}\left[\tau_{\alpha,\beta}\right]=\omega(\beta, \alpha)$, where 
\[\omega(\alpha,\beta):= \frac{1}{2}\left(\alpha\ln\frac{\alpha}{1-\beta}+(1-\alpha)\ln\frac{1-\alpha}{\beta}\right)\,.\]
We define 
\[F(\alpha,\beta):=\frac{\max\{\E_{-1}\left[\tau_{\alpha,\beta}\right],\E_{1}\left[\tau_{\alpha,\beta}\right]\}}{T_{\alpha,\beta}},\]
so that $1-F(\alpha,\beta)$ denotes the 
average sample size reduction under the 
hypothesis with the largest expected sample size.

The following result shows that $F$ 
attains its smallest values along the diagonal $\alpha=\beta$,
so the reduction in average sample size for the asymmetric problem is bounded by the reduction in the symmetric case.
In particular, the average sample size reduction is always smaller than 75\%. For a graphical illustration of the function $F$, see Figure~\ref{fig2}.

\begin{theorem}
\label{mainasym}
For $(\alpha,\beta)\in(0,\frac{1}{2})^2$ we have that
\begin{equation}\label{ineqasym}
F(\alpha,\beta)\geq f(\min\{\alpha,\beta\})\,.
\end{equation}
\end{theorem}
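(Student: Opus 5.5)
The plan is to bound the maximum in $F$ from below by the average of the two expected sample sizes. We then compare that average with the symmetric quantities $\eta(\alpha)$ and $\eta(\beta)$, and finish with the monotonicity of $f$ from Theorem~\ref{main} and an elementary mean inequality. Write $u(x):=\ln\frac{1-x}{x}$, so that $\eta(x)=\frac{1}{2}(1-2x)u(x)$. Also write $z_x:=-\Phi^{-1}(x)>0$ for $x\in(0,\frac12)$, so that $T_x=z_x^2$ and $T_{\alpha,\beta}=\bigl(\frac{z_\alpha+z_\beta}{2}\bigr)^2$. The statement is symmetric in $(\alpha,\beta)$, since swapping them swaps the two expectations and leaves $T_{\alpha,\beta}$ unchanged. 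We may therefore assume $\alpha\le\beta$, so that $\min\{\alpha,\beta\}=\alpha$.

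\textbf{Step 1.} Expanding the logarithms in $\omega(\alpha,\beta)+\omega(\beta,\alpha)$ and collecting the coefficients of $\ln\alpha,\ln(1-\alpha),\ln\beta,\ln(1-\beta)$ should give
\[
\omega(\alpha,\beta)+\omega(\beta,\alpha)=\tfrac12(1-\alpha-\beta)\bigl(u(\alpha)+u(\beta)\bigr).
\]
Subtracting $\eta(\alpha)+\eta(\beta)=\frac12\bigl[(1-2\alpha)u(\alpha)+(1-2\beta)u(\beta)\bigr]$ leaves
\[
\tfrac12(\beta-\alpha)\bigl(u(\alpha)-u(\beta)\bigr)\ge 0,
\]
because $u$ is decreasing. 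Hence
\[
\max\{\E_{-1}[\tau_{\alpha,\beta}],\E_1[\tau_{\alpha,\beta}]\}\ \ge\ \tfrac12\bigl(\omega(\alpha,\beta)+\omega(\beta,\alpha)\bigr)\ \ge\ \tfrac12\bigl(\eta(\alpha)+\eta(\beta)\bigr).
\]

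\textbf{Step 2.} By the definition \eqref{ffunction}, $\eta(x)=f(x)z_x^2$. Since $\alpha\le\beta$, Theorem~\ref{main} gives $f(\beta)\ge f(\alpha)$, and therefore
\[
\tfrac12\bigl(\eta(\alpha)+\eta(\beta)\bigr)\ \ge\ f(\alpha)\,\frac{z_\alpha^2+z_\beta^2}{2}.
\]

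\textbf{Step 3.} The quadratic-mean versus arithmetic-mean inequality gives $\frac{z_\alpha^2+z_\beta^2}{2}\ge\bigl(\frac{z_\alpha+z_\beta}{2}\bigr)^2=T_{\alpha,\beta}$. Dividing the chain from Steps 1 and 2 by $T_{\alpha,\beta}$ then yields $F(\alpha,\beta)\ge f(\alpha)=f(\min\{\alpha,\beta\})$, which is \eqref{ineqasym}.

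The only step that needs any real work is the algebra in Step 1, namely checking the closed form of the symmetrized $\omega$ and the sign of the difference. Everything else is monotonicity of $f$ and a standard mean inequality. If the identity in Step 1 turned out to fail, the fallback would be to bound $\max\{\omega(\alpha,\beta),\omega(\beta,\alpha)\}$ directly by the single term $\omega(\beta,\alpha)$, using convexity properties of $\omega$. The averaging route above should not require this.
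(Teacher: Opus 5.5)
Your Step 1 has a sign error, and it breaks the whole argument. The closed form $\omega(\alpha,\beta)+\omega(\beta,\alpha)=\tfrac12(1-\alpha-\beta)\bigl(u(\alpha)+u(\beta)\bigr)$ is right. But subtracting $\eta(\alpha)+\eta(\beta)$ leaves the coefficient $(1-\alpha-\beta)-(1-2\alpha)=\alpha-\beta$ on $u(\alpha)$ and $\beta-\alpha$ on $u(\beta)$, so the difference is
\[
\tfrac12(\alpha-\beta)\bigl(u(\alpha)-u(\beta)\bigr)\le 0,
\]
because $u$ is decreasing. The average SPRT cost is therefore \emph{at most} $\tfrac12(\eta(\alpha)+\eta(\beta))$, not at least.

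Keeping the maximum instead of the average does not repair this. With $\alpha\le\beta$ the maximum is $\omega(\beta,\alpha)$ (Lemma~\ref{lemasym}). As $\beta\uparrow\frac12$ one has $z_\beta\to 0$ and $\omega(\beta,\alpha)\to\frac14\ln\frac{1}{4\alpha(1-\alpha)}$. Meanwhile both $\frac12(\eta(\alpha)+\eta(\beta))$ and your Step~2 quantity $f(\alpha)\frac{z_\alpha^2+z_\beta^2}{2}$ tend to $\frac12\eta(\alpha)$. For $\alpha=0.01$ this is roughly $0.81$ versus $1.13$.

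In general, the limiting inequality $\omega(\frac12,\alpha)\ge\frac12\eta(\alpha)$ reads $\ln\frac{1}{4\alpha(1-\alpha)}+(1-2\alpha)\ln\frac{\alpha}{1-\alpha}\ge 0$. Its left side has derivative $-2\ln\frac{\alpha}{1-\alpha}>0$ and vanishes at $\alpha=\frac12$, so the inequality fails for every $\alpha\in(0,\frac12)$. Hence the bound $\max\{\omega(\alpha,\beta),\omega(\beta,\alpha)\}\ge f(\alpha)\frac{z_\alpha^2+z_\beta^2}{2}$ that Steps~1--2 are meant to deliver is false. The QM--AM step discards the factor $\frac{2(z_\alpha^2+z_\beta^2)}{(z_\alpha+z_\beta)^2}$, which tends to $2$ as $\beta\uparrow\frac12$, and the theorem has no slack for that. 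The true target there is only $f(\alpha)T_{\alpha,\beta}\to\frac14\eta(\alpha)\approx 0.56$.

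What is needed is a direct comparison of $\omega(\max,\min)$ with $f(\min)\,T_{\alpha,\beta}$ that keeps the cross term $z_\alpha z_\beta$. The paper fixes the smaller error probability $m$ and studies
\[
g(x)=8z_m^2\Bigl(\omega(x,m)-f(m)\tfrac{(z_x+z_m)^2}{4}\Bigr)
\]
on $[m,\frac12)$, establishing the following.
\begin{itemize}
\item $g(m)=0$.
\item $g(\frac12)\ge 0$, which reduces to $2\ln\frac{1}{4m(1-m)}+(1-2m)\ln\frac{m}{1-m}\ge 0$. This is the same boundary inequality as above, but with the factor $2$ that your route loses.
\item $g'(m)>0>g'(\frac12-)$.
\item $g'$ is convex. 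This is where the real work lies: Lemma~\ref{lem:2d_ineq}, via the P\'olya-type bound $(2\Phi(z)-1)^2\le 1-e^{-2z^2/\pi}$ together with $f\ge\frac14$ from Theorem~\ref{main}.
\end{itemize}
Hence $g$ rises and then falls, so $g\ge 0$. Your fallback via ``convexity properties of $\omega$'' would have to supply an argument of this kind; as stated, it is not a proof.
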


\begin{figure}[!htbp]
\centering
\begin{minipage}{0.5\textwidth}
    \centering
    \includegraphics[width=\textwidth]{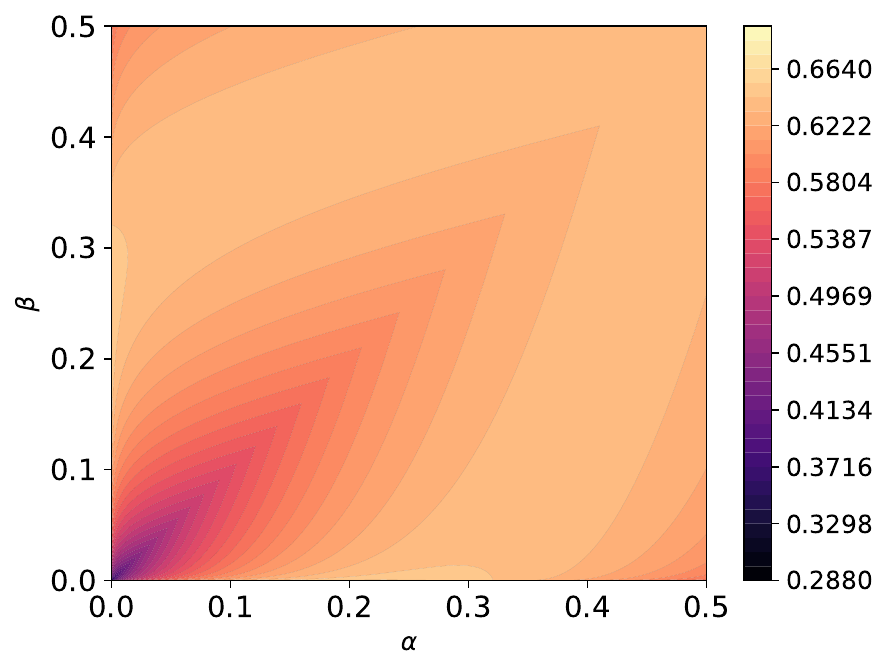}
\end{minipage}\hfill
\begin{minipage}{0.5\textwidth}
    \centering
    \includegraphics[width=\textwidth]{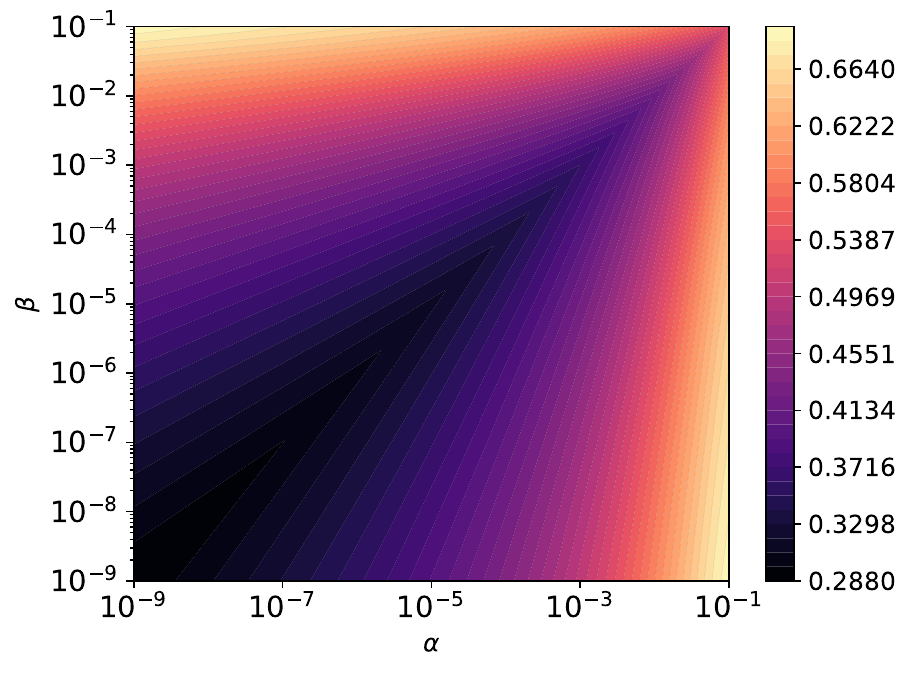}
\end{minipage}
\caption{The function $F(\alpha,\beta)$ on $(0,\frac{1}{2})^2$ (left), and $F(\alpha,\beta)$ for $(\alpha,\beta)\in(0,\frac{1}{10})^2$ with logarithmic axes (right).}
\label{fig2}
\end{figure}

In the proof of Theorem~\ref{mainasym} we will use the following auxiliary results.

\begin{lemma}
\label{lemasym}
    We have 
    \[\max\{\omega(\alpha,\beta), \omega(\beta,\alpha)\}
=\omega(\max\{\alpha,\beta\},\min\{\alpha,\beta\}).\]
\end{lemma}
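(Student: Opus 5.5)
Since $\omega$ is not symmetric, the claim says that the larger expected sample size belongs to the case where the \emph{larger} error bound sits in the first argument. The plan is to show that $\omega(\alpha,\beta)-\omega(\beta,\alpha)$ has the sign of $\alpha-\beta$ on $(0,\frac12)^2$. The lemma then follows at once: if $\alpha\geq\beta$, the maximum is $\omega(\alpha,\beta)=\omega(\max\{\alpha,\beta\},\min\{\alpha,\beta\})$, and the case $\alpha<\beta$ is symmetric.

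Define $D(\alpha,\beta):=2\bigl(\omega(\alpha,\beta)-\omega(\beta,\alpha)\bigr)$. Expanding the logarithms gives
\[
D(\alpha,\beta)=\alpha\ln\alpha+(1-\alpha)\ln(1-\alpha)-\beta\ln\beta-(1-\beta)\ln(1-\beta)
+(1-\beta)\ln(1-\beta)-(1-\alpha)\ln(1-\alpha)-(1-\alpha)\ln\beta+(1-\beta)\ln\alpha+\beta\ln(1-\alpha)-\alpha\ln(1-\beta).
\]
I would simplify this by collecting coefficients rather than manipulating it further. The result should be
\[
D(\alpha,\beta)=\ln\frac{\alpha}{\beta}-(\alpha+\beta)\Bigl(\ln\frac{\alpha}{\beta}+\ln\frac{1-\beta}{1-\alpha}\Bigr)
=(1-\alpha-\beta)\ln\frac{\alpha}{\beta}-(\alpha+\beta)\ln\frac{1-\beta}{1-\alpha}.
\]
This coefficient bookkeeping is the step where an error is most likely, so I would double-check it by verifying that $D(\alpha,\alpha)=0$ and $D(\beta,\alpha)=-D(\alpha,\beta)$.

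The main obstacle is the sign of $D$, because the two terms compete: for $\alpha>\beta$, both $\ln\frac{\alpha}{\beta}>0$ and $\ln\frac{1-\beta}{1-\alpha}>0$. I would fix $\beta$ and differentiate in $\alpha$:
\[
\partial_\alpha D=-\ln\frac{\alpha}{\beta}-\ln\frac{1-\beta}{1-\alpha}+\frac{1-\alpha-\beta}{\alpha}-\frac{\alpha+\beta}{1-\alpha}.
\]
At $\alpha=\beta$ this equals $\frac{1-2\beta}{\beta}-\frac{2\beta}{1-\beta}=\frac{1-2\beta}{\beta(1-\beta)}\cdot(1-\beta)\cdot\ldots$; concretely it is $\frac{(1-\beta)(1-2\beta)-2\beta^2}{\beta(1-\beta)}$. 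This is positive only when $1-3\beta>0$, so a monotonicity argument alone will not work globally.

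I would therefore switch to the substitution $a=\ln\frac{\alpha}{1-\alpha}$, $b=\ln\frac{\beta}{1-\beta}$, so that $\ln\frac{\alpha}{\beta}+\ln\frac{1-\beta}{1-\alpha}=a-b$. In this form
\[
D=(1-\alpha-\beta)\ln\frac{\alpha}{\beta}-(\alpha+\beta)\ln\frac{1-\beta}{1-\alpha}.
\]
Using $\alpha+\beta<1$, it suffices to show
\[
\frac{\ln(\alpha/\beta)}{\ln\bigl((1-\beta)/(1-\alpha)\bigr)}\geq\frac{\alpha+\beta}{1-\alpha-\beta}\quad\text{for }\alpha>\beta.
\]
By the mean value theorem the left side equals $\frac{(1-\xi_2)}{\xi_1}$-type ratios of logarithmic means, namely $\frac{L(1-\alpha,1-\beta)}{L(\alpha,\beta)}$ with $L$ the logarithmic mean. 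The target is then $L(1-\alpha,1-\beta)\,(1-\alpha-\beta)\geq L(\alpha,\beta)(\alpha+\beta)$. I would try to prove this using the bounds $\sqrt{xy}\le L(x,y)\le\frac{x+y}{2}$. Then $L(\alpha,\beta)(\alpha+\beta)\le\frac{(\alpha+\beta)^2}{2}$, and $L(1-\alpha,1-\beta)\geq\sqrt{(1-\alpha)(1-\beta)}$. Finally I would check $\sqrt{(1-\alpha)(1-\beta)}(1-\alpha-\beta)\ge\frac{(\alpha+\beta)^2}{2}$ on $(0,\frac12)^2$, possibly after refining the estimate near $\alpha,\beta\to\frac12$ where it is tight.
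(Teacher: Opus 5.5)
There is a genuine gap. Your reduction is the right one: show that $D=2(\omega(\alpha,\beta)-\omega(\beta,\alpha))$ has the sign of $\alpha-\beta$. But the formula you compute for $D$ is wrong, and everything after it rests on that error.

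\textbf{The formula for $D$.} Your expansion contains the spurious pair $+(1-\beta)\ln(1-\beta)-(1-\alpha)\ln(1-\alpha)$. The collected form does not even agree with that expansion. The correct expression is
\[
D(\alpha,\beta)=(1+\alpha-\beta)\ln\frac{\alpha}{1-\beta}+(1-\alpha+\beta)\ln\frac{1-\alpha}{\beta}.
\]
For instance, at $(\alpha,\beta)=(0.4,0.1)$ the true value is about $0.200$, whereas your formula gives about $0.490$. The two checks you propose cannot detect this, because your expression also vanishes on the diagonal and is also antisymmetric.

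\textbf{Consequences of the error.} Your expression is genuinely negative for some $\alpha>\beta$; at $(0.45,0.4)$ it is about $-0.056$. Your own computation already signals this: $\partial_\alpha D|_{\alpha=\beta}=\frac{1-3\beta}{\beta(1-\beta)}<0$ for $\beta>\frac{1}{3}$, together with $D(\beta,\beta)=0$. You should have read that as a contradiction with the lemma, i.e.\ as evidence of an algebra error, not as a reason to change strategy. As a result, the logarithmic-mean inequality you aim for is false. So is the final estimate $\sqrt{(1-\alpha)(1-\beta)}\,(1-\alpha-\beta)\ge\frac{(\alpha+\beta)^2}{2}$. It already fails at $(0.4,0.3)$. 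Near $(\frac{1}{2},\frac{1}{2})$ its left side tends to $0$ while the right side tends to $\frac{1}{2}$, so no local refinement can rescue it. The substitution $a,b$ is also introduced and never used.

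\textbf{How to finish with the correct $D$.} The argument is then short; this is essentially the paper's proof. For fixed $\beta$,
\[
\partial_\alpha^2 D=(1-\alpha-\beta)\Bigl(\frac{1}{(1-\alpha)^2}-\frac{1}{\alpha^2}\Bigr)<0 \quad\text{on } \Bigl(\beta,\frac{1}{2}\Bigr),
\]
so $D(\cdot,\beta)$ is concave on $[\beta,\frac{1}{2}]$ with $D(\beta,\beta)=0$. For the other endpoint, set $m(\beta):=D(\frac{1}{2},\beta)$. It satisfies $m(\frac{1}{2})=m'(\frac{1}{2})=0$ and $m''(\beta)=\frac{1}{2}\bigl(\frac{1}{\beta}-\frac{1}{1-\beta}\bigr)^2\ge 0$, hence $D(\frac{1}{2},\beta)\ge 0$. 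A concave function that is nonnegative at both endpoints is nonnegative in between. Therefore $D\ge 0$ for $\beta\le\alpha\le\frac{1}{2}$, which gives the lemma.
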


\begin{proof}
Defining $g:[\beta,\frac{1}{2}]\to\R$ by 
\begin{eqnarray*}
g(\alpha) &:=& 2\omega(\alpha,\beta)-2\omega(\beta,\alpha)\\
&=& (1+\alpha-\beta)\ln\frac{\alpha}{1-\beta} + (1-\alpha+\beta)\ln\frac{1-\alpha}{\beta},
\end{eqnarray*}
it is straightforward to check that $g$ is concave with $g(\beta)=0$
and $g(\frac{1}{2})\geq 0$. Therefore $g\geq 0$, and the result follows.
\end{proof}

\begin{lemma}\label{lem:2d_ineq}
We have that 
    \begin{equation}\label{ineq3}
    \left(\Phi^{-1}(\beta)\right)^2\varphi^3(\Phi^{-1}(\alpha))\frac{2\alpha-1}{\alpha^2(1-\alpha)^2}+2(1-2\beta)\ln\frac{\beta}{1-\beta}\Phi^{-1}(\alpha)\geq 0
\end{equation}
for $0< \beta\leq \alpha <\frac{1}{2}$.
\end{lemma}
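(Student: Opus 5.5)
The plan is to separate the two terms and reduce \eqref{ineq3} to a one-variable inequality in $\alpha$ alone. Write $a=\Phi^{-1}(\alpha)$ and $b=\Phi^{-1}(\beta)$, so that $b\le a<0$. The first term of \eqref{ineq3} is nonpositive, since $2\alpha-1<0$. The second term is nonnegative, since both $\ln\frac{\beta}{1-\beta}$ and $a$ are negative. Moreover $(1-2\beta)\ln\frac{1-\beta}{\beta}=2\eta(\beta)$, so \eqref{ineq3} is equivalent to
\[
4\eta(\beta)\,|a|\;\geq\; b^2\,\varphi^3(a)\,\frac{1-2\alpha}{\alpha^2(1-\alpha)^2}.
\]
Dividing by $b^2=T_\beta$ turns the left-hand side into $4f(\beta)\,|a|$. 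By Theorem~\ref{main}, $f$ is increasing with $f(0+)=\frac14$, so $4f(\beta)\ge 1$. This removes $\beta$ entirely; the hypothesis $\beta\le\alpha$ is then not even needed, only $\beta<\frac12$.

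It therefore suffices to prove, for all $\alpha\in(0,\frac12)$,
\[
|a|\,\alpha^2(1-\alpha)^2\ \ge\ (1-2\alpha)\,\varphi^3(a).
\]
Substituting $z=-a> 0$, so that $\alpha=1-\Phi(z)$, this becomes
\[
k(z):=z\,\Phi^2(z)(1-\Phi(z))^2-(2\Phi(z)-1)\varphi^3(z)\ \ge\ 0,\qquad z>0.
\]
The endpoints behave well:
\begin{itemize}
\item As $z\downarrow0$, $k(z)\approx z\bigl(\frac1{16}-2\varphi(0)^4\bigr)=z\bigl(\frac1{16}-\frac1{2\pi^2}\bigr)>0$, although the margin is modest.
\item As $z\to\infty$, the first term behaves like $\varphi^2(z)/z$, which dominates $\varphi^3(z)$.
\end{itemize}

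Two naive routes fail near $z=0$. One would hope to feed Lemma~\ref{lem} directly into the first term of $k$. Used once, it leaves the requirement $\Phi(1-\Phi)(1+z^2)/(2+z^2)\ge\varphi^2$, which fails at $0$ ($\frac18$ versus $\frac1{2\pi}$). Used squared, it also fails at $0$. So a split into two ranges of $z$ is needed.

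\emph{Large $z$, say $z\ge z_0$ with $z_0$ around $1$.} Here I would use the classical bound $1-\Phi(z)\ge \frac{z}{1+z^2}\varphi(z)$, which the paper already used. Together with $2\Phi-1\le1$ and $\Phi(z)\ge\Phi(z_0)$, this reduces the claim to
\[
\frac{z^3}{(1+z^2)^2}\,\Phi(z_0)^2\ \ge\ \varphi(z).
\]
This is elementary: the right-hand side decays like a Gaussian, so it can be checked on $[z_0,\infty)$ by monotonicity.

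\emph{Small $z\in(0,z_0]$.} I would use $2\Phi(z)-1\le 2\varphi(0)z$ and $\varphi(z)\le\varphi(0)e^{-z^2/2}$. Writing $\Phi(1-\Phi)=\frac14-(\Phi-\frac12)^2\ge \frac14-\varphi(0)^2z^2$, and dividing by $z$, the claim reduces to
\[
\Bigl(\tfrac14-\tfrac{z^2}{2\pi}\Bigr)^2\ \ge\ 2\varphi(0)^4e^{-3z^2/2}.
\]
This holds at $z=0$ because $\frac1{16}>\frac1{2\pi^2}$.

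The main obstacle is this middle range. The cheap bounds lose accuracy as $z$ grows, so the split point $z_0$ must be chosen so that both estimates remain valid across it, possibly refining the small-$z$ bound with the actual decay of $\varphi$. If a clean split does not close, the fallback is an ODE/monotonicity argument in the style of Lemma~\ref{lem}. Since $k(0)=k(\infty)=0$, one would aim for a differential inequality $k'\le c(z)k$ with $c(z)>0$, which forces $k\ge0$ exactly as in that proof.
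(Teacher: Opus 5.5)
\paragraph{Verdict: genuine gap.}
Your reduction is correct and is exactly the paper's. The second term of \eqref{ineq3} equals $4\eta(\beta)|a|$; dividing by $T_\beta=b^2$ and using $4f(\beta)\ge 1$ from Theorem~\ref{main} leaves the one-variable inequality $k(z)\ge 0$. As you note, $\beta\le\alpha$ is never used.

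The gap is that you never actually prove $k(z)\ge 0$. Your two estimates do not overlap, so no split point $z_0$ exists:
\begin{itemize}
\item The small-$z$ inequality $\bigl(\frac14-\frac{z^2}{2\pi}\bigr)^2\ge\frac{1}{2\pi^2}e^{-3z^2/2}$ holds only up to about $z=0.885$. At $z=0.9$ the left side is about $0.01466$ and the right side about $0.01503$.
\item For the large-$z$ range, the ratio $\frac{z^3/(1+z^2)^2}{\varphi(z)}$ is increasing, so you need exactly $\frac{z_0^3}{(1+z_0^2)^2}\Phi(z_0)^2\ge\varphi(z_0)$. This holds only from about $z_0=1.13$ on. At $z_0=1.1$ it reads roughly $0.204\ge 0.218$, which is false, and at your suggested $z_0=1$ it is $0.177$ versus $0.242$.
\end{itemize}
So the interval roughly $(0.89,1.13)$ is not covered, even though $k$ is comfortably positive there ($k(1)\approx 0.008$). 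The ODE fallback is not an argument yet: you would have to exhibit $c(z)>0$ with $k'\le c\,k$, and you do not.

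\paragraph{The missing ingredient.}
You need a lower bound on $\Phi(z)(1-\Phi(z))=\frac14\bigl(1-(2\Phi(z)-1)^2\bigr)$ that stays accurate for all $z$. The paper uses P\'olya's classical estimate \eqref{est}, $(2\Phi(z)-1)^2\le 1-e^{-2z^2/\pi}$, obtained by comparing the square $[-z,z]^2$ with the disc of the same area. Equivalently, $\Phi(z)(1-\Phi(z))\ge\frac14e^{-2z^2/\pi}$.

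Your bound $\frac14-\frac{z^2}{2\pi}$ is precisely the linearization of this, which is why it degrades near $z\approx 1$. Replace it by the exponential and keep your $2\Phi(z)-1\le 2\varphi(0)z$. Your own small-$z$ computation then becomes $\frac1{16}e^{-4z^2/\pi}\ge\frac1{2\pi^2}e^{-3z^2/2}$. This holds for every $z\ge 0$, since $\frac1{16}>\frac1{2\pi^2}$ and $\frac4\pi<\frac32$. That closes the proof in one stroke, with no splitting and no large-$z$ argument, and it is exactly the paper's proof.
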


\begin{proof}
Denote by $z = -\Phi^{-1}(\alpha)> 0$. Rearranging the terms, we have that \eqref{ineq3} is equivalent to
\begin{equation}\label{equiv}
\varphi^3(z)\frac{1-2\alpha}{\alpha^2(1-\alpha)^2}\leq 4f(\beta)z,
\end{equation}
    where $f$ is as in \eqref{ffunction}. Observe that 
\begin{equation}\label{est1}1-2\alpha=2\Phi(z)-1=2\int_0^{z} \varphi(x)dx \leq  \sqrt{\frac{2}{\pi}}z
\end{equation}
since the Gaussian density is decreasing on $[0,\infty)$, and that 
    \begin{eqnarray*}
    \alpha(1-\alpha) &=& \frac{1-(1-2\alpha)^2}{4} = 
    \frac{1- (2\Phi(z)-1)^2}{4}.
    \end{eqnarray*}

Now recall the classical estimate (see \cite{P} and \cite{Williams})
\begin{eqnarray} \label{est}
\left(2\Phi(z)-1\right)^2 &=& \int_{-z}^{z}\int_{-z}^{z}\varphi(x)\varphi(y)\,dx\,dy \\
\notag
&\leq& \iint_{B(\frac{2z}{\sqrt\pi})}\varphi(x)\varphi(y)\,dx\,dy=
1-e^{-\frac{2z^2}{\pi}}
\end{eqnarray}
for $z\geq 0$, where $B(\frac{2z}{\sqrt\pi})$ is the disc with area $4z^2$ centered at the origin.
  Using \eqref{est}, we have
    \[\frac{1}{\alpha^2(1-\alpha)^2}\leq \frac{16}{(1-(1-e^{-\frac{2z^2}{\pi}}))^2} = 16 e^{\frac{4z^2}{\pi}}.\]
Consequently, also using \eqref{est1}, 
    \[\varphi^3(z)\frac{1-2\alpha}{\alpha^2(1-\alpha)^2} \leq  16\sqrt{\frac{2}{\pi}}z e^{\frac{4z^2}{\pi}}  \varphi^3(z)= \frac{8}{\pi^2} z e^{(\frac{4}{\pi}-\frac{3}{2}) z^2}
        \leq z\leq 4f(\beta)z\]
    since $f(\beta) \geq \frac{1}{4}$ by Theorem~\ref{main}. This proves that \eqref{equiv} holds, which finishes the proof.
\end{proof}

\noindent
{\em Proof of Theorem~\ref{mainasym}.}
By symmetry, it suffices to prove \eqref{ineqasym} when   $0<\beta\leq\alpha<1/2$, and in that case, it follows from Lemma~\ref{lemasym} that
\[F(\alpha,\beta) = \frac{\omega(\alpha,\beta)}{T_{\alpha,\beta}}= 
   \frac{2\left(\alpha\ln\frac{\alpha}{1-\beta}+(1-\alpha)\ln\frac{1-\alpha}{\beta}\right)}{\left(\Phi^{-1}(\alpha)+\Phi^{-1}(\beta)\right)^2}.\]
Define for fixed $\beta\in(0,\frac{1}{2})$ the function 
$g:[\beta,\frac{1}{2})\to\mathbb R$ by
\[g(\alpha):= 4(\Phi^{-1}(\beta))^2\left(\alpha\ln\frac{\alpha}{1-\beta}+(1-\alpha)\ln\frac{1-\alpha}{\beta}\right) +(1-2\beta)\ln\frac{\beta}{1-\beta}\left(\Phi^{-1}(\alpha)+\Phi^{-1}(\beta)\right)^2.\]
To prove \eqref{ineqasym}, it then suffices to check that $g(\alpha)\geq 0$. Clearly, $g(\beta)=0$. Evaluating $g$ at $\frac{1}{2}$, we have
\[g\left(\frac{1}{2}\right) = \left(\Phi^{-1}(\beta)\right)^2\left(2 \ln \frac{1}{4\beta(1-\beta)} +(1-2\beta )\ln\frac{\beta}{1-\beta}\right)=:\left(\Phi^{-1}(\beta)\right)^2\ h(\beta).\]
Observe that 
\[h'(\beta) = \frac{2\beta(\beta-1) \ln\frac{\beta}{1-\beta} +2\beta-1}{\beta(1-\beta)}\] 
so that $h'(\frac{1}{2}) = 0$. Also, 
$h''(\beta) = \left(\frac{1-2\beta}{\beta(1-\beta)}\right)^2 >0$, so $h'(\beta)$ is strictly negative on $(0,\frac{1}{2})$. Therefore the function $h$ is strictly decreasing on $(0,\frac{1}{2})$, and 
\[g\left(\frac{1}{2}\right) =\left(\Phi^{-1}(\beta)\right)^2\ h(\beta)\geq \left(\Phi^{-1}(\beta)\right)^2 h(\frac{1}{2}) = 0.\] 
Next, straightforward differentiation shows that 
\[
g'(\alpha) = 4(\Phi^{-1}(\beta))^2\left(\ln\frac{\alpha}{1-\alpha}+\ln\frac{\beta}{1-\beta}\right)
+2(1-2\beta)\frac{\Phi^{-1}(\alpha)+\Phi^{-1}(\beta)}{\varphi(\Phi^{-1}(\alpha))}\ln\frac{\beta}{1-\beta}\,.
\]
Denoting $z = -\Phi^{-1}(\beta)$, it follows that
\begin{align*}
 g'(\beta)&= 4\Phi^{-1}(\beta)\left(2\Phi^{-1}(\beta)+\frac{1-2\beta}{\varphi(\Phi^{-1}(\beta))}\right)\ln\frac{\beta}{1-\beta}\\
 & = -4z\left(-2z+\frac{2 \int_0^{z}\varphi(u)du}{\varphi(z)}\right)\ln\frac{\beta}{1-\beta}\\
 &>  -4z\left(-2z+\frac{2 \int_0^{z }\varphi(z)du}{\varphi(z)}\right)\ln\frac{\beta}{1-\beta} = 0.
\end{align*}
Similarly,
\begin{align*}
    g'\left(\frac{1}{2}-\right)&= 2\Phi^{-1}(\beta)\left(2\Phi^{-1}(\beta)+(1-2\beta)\sqrt{2\pi}\right)\ln\frac{\beta}{1-\beta}\\
    &<-2z\left(-2z+2\sqrt{2\pi}\int_0^{z }\varphi(0)du\right)\ln\frac{\beta}{1-\beta} = 0.
\end{align*}
Hence, by continuity, a root of $g'$ exists in $(\beta,\frac{1}{2})$. In the remainder, we will check that $g'$ is convex. Together with $g'\left(\frac{1}{2}-\right)<0$, this implies uniqueness of such root. Combined with the fact that $g(\beta) = 0$, $g'(\beta)>0$, and $g(\frac{1}{2}-)>0$, we find $g(\alpha)\geq 0$ for all $0<\beta\leq \alpha<\frac{1}{2}$. 

To show that $g'$ is convex, we compute
\begin{eqnarray*} 
g'''(\alpha)&=& 4(\Phi^{-1}(\beta))^2\frac{2\alpha-1}{\alpha^2(1-\alpha)^2}\\&& \hspace{-15mm}+\frac{2(1-2\beta)}{\varphi^3(\Phi^{-1}(\alpha))}\left(\Phi^{-1}(\beta)+2\Phi^{-1}(\alpha)(2+\Phi^{-1}(\alpha)(\Phi^{-1}(\alpha)+\Phi^{-1}(\beta)))\right)\ln\frac{\beta}{1-\beta}\\
&\geq& 4(\Phi^{-1}(\beta))^2\frac{2\alpha-1}{\alpha^2(1-\alpha)^2}+4\frac{2(1-2\beta)\Phi^{-1}(\alpha)}{\varphi^3(\Phi^{-1}(\alpha))}\ln\frac{\beta}{1-\beta}\,,
\end{eqnarray*} which is positive by Lemma \ref{lem:2d_ineq}, concluding the proof.
\qed

\begin{remark}
    It is a consequence of Theorems~\ref{main} and \ref{mainasym} that $F(\alpha,\beta)\geq \frac{1}{4}$.
In contrast to the symmetric case of Section~\ref{sec3}, however, there is no (non-trivial) upper bound on $F$. In fact, it is straightforward to check that  
    \[\lim_{\beta\to 0}F(\alpha,\beta)=1-\alpha,\]
and it follows that
    \[\liminf_{(\alpha,\beta)\to(0,0)}F(\alpha,\beta)=\frac{1}{4}<1=\limsup_{(\alpha,\beta)\to(0,0)}F(\alpha,\beta).\]
One may also note that this contradicts the statement from \cite{A} that $F(\alpha,\beta)\leq 17/30$ for $\alpha,\beta\in(0,0.03)$.
\end{remark}

\end{document}